 \renewcommand{\div}{\mathop{\mathrm{div}}\nolimits}
\newtheorem*{thm*}{Theorem A}
\newtheorem{thm}{Theorem}
\newtheorem{dfn}{Definition}
\newtheorem{note}{Notation}
\newtheorem{lemma}{Lemma}
\newtheorem{cor}{Corollary}
\newtheorem{conj}{Conjecture}
\begin{document}

\def\RR{{\mathbb{R}}}

\title[nonlocal elliptic systems in two dimensions]{One-dimensional symmetry for integral systems in two dimensions}
\maketitle

\begin{center}
\author{Mostafa Fazly \footnote{The author is pleased to acknowledge the support of University of Alberta Start-up Grant RES0019810 and National Sciences and Engineering Research Council of Canada (NSERC) Discovery Grant RES0020463.} 
}
\\
{\it\small Department of Mathematical and Statistical Sciences, University of Alberta}\\
{\it\small Edmonton, Alberta, Canada T6G 2G1}\\
{\it\small e-mail: fazly@ualberta.ca}\vspace{1mm}
\end{center}

\begin{abstract}   The purpose of this brief paper is to prove De Giorgi type results for stable solutions of the following nonlocal system of integral equations in two dimensions 
$$   L(u_i)    =   H_i(u)  \quad  \text{in} \ \  \RR^2 , $$ 
where $u=(u_i)_{i=1}^m$ for $u_i: \RR^n\to \RR$, $H=(H_i)_{i=1}^m$ is a general nonlinearity.  The operator $L$ is given by  
$$L(u_i (x)):=  \int_{\RR^2} [u_i(x) - u_i(z)] K(z-x) dz,$$
for some kernel $K$.   The idea is to apply a linear Liouville theorem for the quotient of partial derivatives, just like in the proof of the classical De Giorgi's conjecture in lower dimensions.   Since there is no Caffarelli-Silvestre local extension problem associated to the above operator,   we deal with this problem directly via certain integral estimates.

\end{abstract}

\noindent
{\it \footnotesize 2010 Mathematics Subject Classification}. {\scriptsize  35J60, 35B35, 35B32,  35D10, 35J20}\\
{\it \footnotesize Keywords: Nonlocal elliptic systems,  De Giorgi's conjecture,  stable solutions, integral operators}. {\scriptsize }

\section{Introduction} 
We study the following semilinear elliptic system of nonlocal equations 
 \begin{equation} \label{main}
   L(u_i)    =   H_i(u)  \quad  \text{in} \ \  \RR^n 
  \end{equation}   
  where $u=(u_i)_{i=1}^m$ for $u_i: \RR^n\to \RR$ and $H=(H_i)_{i=1}^m$ is a sequence of locally Lipschitz functions.   The operator $L$ is an integral operator of convolution type 
  \begin{equation}\label{Lui}
L(u_i (x))=  \int_{\RR^n} [u_i(x) - u_i(z)] K(z-x) dz,  
\end{equation}
where the kernel $K$ satisfies a certain number of conditions.   We are interested in  De Giorgi type results for this system in two dimensions with a general nonlinearity $H$.    The case of scalar equations, that is when $m=1$,  is considered very recently by Ros-Oton and Sire in \cite{ros} and by Hamel and Valdinoci in \cite{hv}.     

Regarding semilinear equations, Ennio De Giorgi (1978) in \cite{De} proposed the following conjecture that has been a great inspiration for many authors in the field.   

\begin{conj} 
Suppose that $u$ is an entire solution of the Allen-Cahn equation that is 
\begin{equation}\label{allen}
\Delta u+u-u^3 =0	 \quad \text{on}\ \  \mathbb{R}^n, 
\end{equation}
satisfying $|u({x})| \le 1$, $\frac{\partial u}{\partial x_n} ({x}) > 0$ for ${x} = ({x}',x_n) \in \mathbb{R}^n$.	
Then, at least in dimensions $n\le 8$ the level sets of $u$ must be hyperplanes, i.e. there exists $g \in C^2(\mathbb{R})$ such that $u({x}) = g(a{x}' - x_n)$, for some fixed $a \in \mathbb{R}^{n-1}$.
\end{conj}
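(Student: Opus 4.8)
The plan is the classical two-step strategy behind the low-dimensional cases of the conjecture: first convert the monotonicity hypothesis into a stability statement, then run a \emph{linear Liouville theorem} on the quotient of two partial derivatives. I would write $f(u)=u-u^3$, so that \eqref{allen} reads $\Delta u+f(u)=0$; differentiating in each coordinate, every $\sigma_i:=\partial_{x_i}u$ solves the linearized equation $\Delta\sigma_i+f'(u)\,\sigma_i=0$. The hypothesis $\sigma_n=\partial_{x_n}u>0$ exhibits a \emph{positive} solution of this linear operator, which forces $u$ to be a stable solution: multiplying the equation for $\sigma_n$ by $\sigma_n\phi^2$ and integrating by parts gives, for every $\phi\in C^\infty_c(\RR^n)$,
\[
\int_{\RR^n}\bigl|\nabla(\sigma_n\phi)\bigr|^2-f'(u)\,\sigma_n^2\phi^2\,dx=\int_{\RR^n}\sigma_n^2\,|\nabla\phi|^2\,dx\ \ge\ 0 ,
\]
which is the stability inequality written on the test function $\sigma_n\phi$, hence --- since $\sigma_n$ is continuous and strictly positive --- on every test function.

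Using $\sigma_n>0$ I would then define, for each $i\in\{1,\dots,n-1\}$, the quotient $\omega:=\sigma_i/\sigma_n$. Eliminating $f'(u)$ between the linearized equations for $\sigma_i$ and $\sigma_n$ produces the degenerate elliptic equation $\div(\sigma_n^2\nabla\omega)=0$ in $\RR^n$. Testing it against $\omega\zeta_R^2$, where $\zeta_R$ is a logarithmic cutoff equal to $1$ on $B_{\sqrt R}$, supported in $B_R$, with $|\nabla\zeta_R|\le C/(|x|\log R)$, and applying Cauchy--Schwarz, I obtain the Caccioppoli-type estimate
\[
\int_{B_{\sqrt R}}\sigma_n^2\,|\nabla\omega|^2\,dx\ \le\ 4\int_{\{\sqrt R<|x|<R\}}\sigma_i^2\,|\nabla\zeta_R|^2\,dx .
\]
In the two-dimensional case treated in this paper, $|u|\le1$ forces $|\nabla u|\le C$ by interior elliptic estimates, so the right-hand side is at most $C/\log R\to0$ and therefore $\nabla\omega\equiv0$; in dimension $n=3$ one first needs the energy growth $\int_{B_R}|\nabla u|^2\le CR^{n-1}$, which follows from the monotone structure of $u$, and then the same cutoff closes the argument.

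Once $\nabla\omega\equiv0$, connectedness of $\RR^n$ together with $\sigma_n>0$ forces $\omega$ to be a constant $a_i$, so $\partial_{x_i}u=a_i\,\partial_{x_n}u$ for $i=1,\dots,n-1$. Hence $\nabla u$ is everywhere parallel to the fixed vector $(a_1,\dots,a_{n-1},1)$, so $u$ depends on a single affine coordinate and there is $g\in C^2(\RR)$ with $u(x)=g(a'\cdot x'-x_n)$ for a suitable $a'\in\RR^{n-1}$, which is the asserted one-dimensional symmetry.

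The hard part is the Liouville step in dimensions $n\ge4$: the energy bound $\int_{B_R}|\nabla u|^2\le CR^{n-1}$, though still valid, is no longer strong enough to drive $\int_{B_R}\sigma_n^2|\nabla\omega|^2$ to zero through a logarithmic cutoff, and one is forced instead into the delicate classification of the blow-down of $u$ as a minimizer of area --- this is Savin's theorem, which additionally requires the profile condition $\lim_{x_n\to\pm\infty}u(x',x_n)=\pm1$ to reach $4\le n\le8$; for $n\ge9$ the conclusion is in fact false, by the counterexample of del Pino-Kowalczyk-Wei (the cases $n=2$ and $n=3$ being due to Ghoussoub-Gui, Berestycki-Caffarelli-Nirenberg and Ambrosio-Cabr\'{e}). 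In the nonlocal framework of \eqref{main}--\eqref{Lui} the same two obstructions persist, and moreover the first two steps must themselves be rebuilt from scratch: since there is no Caffarelli-Silvestre local extension for the operator $L$, the stability inequality, the degenerate equation $\div(\sigma_n^2\nabla\omega)=0$, and the logarithmic Caccioppoli bound each have to be replaced by their direct integral analogues involving the kernel $K$ --- which is the programme carried out in the present paper.
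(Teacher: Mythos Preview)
The statement you were handed is \emph{Conjecture~1} in the paper, not a theorem: the paper does not prove it and offers no proof to compare against. It is quoted purely as historical motivation, and the surrounding paragraphs merely survey its status --- Ghoussoub--Gui for $n=2$, Ambrosio--Cabr\'{e} (with Alberti) for $n=3$, Savin for $4\le n\le 8$ under the extra asymptotic hypothesis~\eqref{asymp}, and the del~Pino--Kowalczyk--Wei counterexample for $n\ge 9$. As stated (for all $n\le 8$, without~\eqref{asymp}) the conjecture remains open in dimensions $4\le n\le 8$.

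Your write-up is not a proof of the full conjecture either, and you clearly know this: it is a correct and well-organized sketch of the classical Liouville strategy that settles $n=2$ and $n=3$, followed by an honest acknowledgment that the variational route breaks down for $n\ge 4$ and that Savin's theorem requires the additional profile condition. The computations you indicate --- stability from the positive supersolution $\partial_{x_n}u$, the divergence-form equation $\div(\sigma_n^{2}\nabla\omega)=0$ for $\omega=\partial_{x_i}u/\partial_{x_n}u$, and the logarithmic Caccioppoli estimate --- are the standard ones and match exactly the narrative the paper gives in its introduction (equations (1.3)--(1.5) there). Your closing paragraph, linking this to the integral-kernel programme of the present paper, is also accurate: the paper's Theorem~1 is precisely the nonlocal, system analogue in dimension two of the Ghoussoub--Gui step you described, carried out directly on the kernel $K$ because no Caffarelli--Silvestre extension is available.

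So there is no gap to flag beyond the structural one: the paper contains no proof of this statement, and neither does your proposal for $4\le n\le 8$; what you have written is a faithful summary of the known partial results, in agreement with the paper's own discussion.
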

Ghoussoub and Gui in \cite{gg1} provided the first comprehensive affirmative answer on the De Giorgi conjecture in two dimensions.   In fact their proof if valid for a general Lipschitz nonlinearity and not necessarily double-well potentials.      Their proof uses the following, by now  standard, linear Liouville type theorem for elliptic equations in the divergence form, which (only) holds in dimensions one  and two, see \cite{bar, bbg, gg1}. If $\phi>0$, then any solution $\sigma$ of 
\begin{equation}
\div(\phi^2 \nabla \sigma)=0, 
\end{equation}
such that $\phi\sigma$ is bounded, is necessarily constant. This Liouville theorem is then applied to 
 the ratio $\sigma :=   \frac{\partial u}{\partial x_1}  / \frac{\partial u}{\partial x_2}  $ to finish the proof in two dimensions.  Ambrosio and Cabr\'{e} in  \cite{ac}, and later with Alberti in \cite{aac} for a general Lipschitz nonlinearity, provided a proof  for the conjecture  in three  dimensions by noting  that for the linear Liouville theorem to hold,  it suffices that 
\begin{equation}
\int_{B_{2R}\setminus B_R}\phi^2\sigma^2 \leq CR^2, 
\end{equation} 
for any $R>1$.   Then by showing that any solution $u$ such that $\partial_{x_n} u>0$ satisfies the energy estimate 
\begin{equation}
\int_{B_R}\phi^2\sigma^2 \leq CR^{n-1}, 
\end{equation} 
they finished the proof.    Even though the original conjecture remains open in dimensions $4\leq n \leq 8$,  there are various partial, yet groundbreaking, results for dimensions $4\leq n \leq 8$.  In this regard,  Ghoussoub and Gui showed in \cite{gg2} that the conjecture holds in  four and five dimensions for solutions that satisfy certain antisymmetry conditions, and Savin in \cite{sav} established its validity   for $4 \le n \le 8$ under the following additional natural hypothesis on the solution,
 \begin{equation}\label{asymp}
\lim_{x_n\to\pm\infty } u({x}',x_n)\to \pm 1.
\end{equation}
In \cite{wang}, Wang provided a new proof for the proof of Savin that involves more variational methods in the spirit.     Unlike the above proofs in dimensions $n\leq 5$, the proof of Savin is non-variational and does not use a Liouville type theorem. We refer interested readers to \cite{bar,bbg,bcn,bhm} for more information regrading this Liouville theorem for local semilinear equations.     For the case of $n\ge 9$,  del Pino, Kowalczyk and Wei in \cite{dkw} provided an example that shows the eight dimensions in the critical dimension for the conjecture. In addition, for the case of system of semilinear equations, Ghoussoub and the author in \cite{fg} established a counterpart of the above linear Liouville theorem for $H$-monotone solutions.  See \cite{mf} for a slightly improved version of this theorem as well.    Note that since  $u=(u_i)_{i=1}^m$ is a multi-component solution, the concept of monotonicity needs to be adjusted accordingly.   We borrow the following definition from \cite{fg}.    
    
 \begin{dfn}\label{Hmon} A solution $u=(u_k)_{k=1}^m$ of (\ref{main}) is said to be  $H$-monotone if the following hold,
\begin{enumerate}
 \item[(i)] For every $1\le i \le m$, each $u_i$ is strictly monotone in the $x_n$-variable (i.e., $\partial_{x_n} u_i\neq 0$).

\item[(ii)]  For all $i< j$,  we have 
  \begin{equation}\label{huiuj}
\hbox{$\partial_j H_i(u) \partial_n u_i(x) \partial_n u_j (x) > 0$  for all $x\in\mathbb {R}^n$.}
\end{equation}
\end{enumerate}
\end{dfn}
Note also that (ii) implies a combinatorial assumption on the sign of partial derivatives of $H_i$ and we call any system that admits such an assumption as {\it orientable} system.   In other words, for orientable systems there exists a sequence of sign functions $\theta=(\theta_i)_{i=1}^m$ where  $\theta_i\in \{-1,1\}$ such that $\partial_j H_i(u) \theta_i \theta_j >0$.   Counterparts of the above Liouville theorem for nonlocal equations with the fractional Laplacian operator are provided  by Cabr\'{e} and  Sol\'{a}-Morales in \cite{csm} and Cabr\'{e}  and Sire in \cite{cs1,cs2}.   In addition, for the case of system of equations  Sire and the author in \cite{fs} and the author in \cite{mf2} provided such a Liouville theorem.  We refer interested reader to \cite{ccinti} by Cabr\'{e} and Cinti for the fractional Laplacian case as well.   

This paper, in the light of the above results,  deals with one-dimensional symmetry results for bounded stable solutions of  (\ref{main}) via proving a linear Liouville theorem for the quotient of partial derivatives.  Note that for the case of fractional Laplacian, see  \cite{cs1,cs2,csm,fs,ccinti} and references therein,  the extension problem that is a local problem, derived by Caffarelli and Silvestre in \cite{cas},   plays a key role to prove one-dimensional symmetry results.  But, such an extension problem is not known for a general operator $L$ given in (\ref{Lui}).  Therefore, we apply the integral definition of the operator $L$ directly in our proofs.  Most of the ideas and computational techniques applied in this work, in regards to the operator $L$, are strongly motivated by the ones applied in \cite{ros,hv}.   Here is the notion of stability.  
 \begin{dfn} \label{stable}
A solution $u=(u_k)_{k=1}^m$ of (\ref{main}) is called stable when there exist a sequence of  functions   $\phi=(\phi_k)_{k=1}^m$ and a constant $\lambda \ge 0$  such that each $\phi_i$ does not change sign. In addition,  $\phi$ satisfies   the following 
 \begin{equation} \label{L}
L (\phi_i)= \sum_{j=1}^m \partial_j H_i(u)  \phi_j + \lambda \phi_i    \ \ \ \text{in}\ \ \mathbb R^n, 
  \end{equation} 
  where   $\partial_j H_i(u)  \phi_j \phi_i >0 $ for all $i<j$ when $1\le i,j\le m$. 
\end{dfn} 
It is straightforward to see that any $H$-monotone solution is a stable solution via differentiating (\ref{main}) with respect to $x_n$ and defining $\phi_i=\partial_{x_n} u_i$. As it is shown in \cite{abg},  in the absence of $H$-monotonicity and stability there are in fact two dimensional solutions for  Allen-Cahn systems in two dimensions. See also \cite{ali} by Alikakos for a comprehensive note regarding semilinear local systems.  Therefore, the concepts of $H$-monotonicity and stability seem to be crucial in the context.     Let us fix the following notation. 

\begin{note}\label{note1}
Throughout this paper we refer to sequences of functions $\psi=(\psi_i)_{i=1}^m$ and $\sigma=(\sigma_i)_{i=1}^m$ for $\psi_i:=\nabla u_i\cdot \eta$ where $\eta(x)=\eta(x',0):\RR^{n-1}\to \RR $ and $\sigma_i:=\frac{\psi_i}{\phi_i}$. 
\end{note}

Note that there is a geometric approach regrading the De Giorgi's conjecture in two dimensions, but in this paper our technique is applying Liouville theorems.    The  next definition is the notion of symmetric systems, introduced in \cite{mf}. The concept of symmetric systems seems to be crucial for providing De Giorgi type results for system (\ref{main}) with a general nonlinearity. 

\begin{dfn}\label{symmetric} We call system (\ref{main}) symmetric if the matrix of  partial derivatives of all components of $H=(H_i)_{i=1}^m$ that is $$\mathbb{H}:=(\partial_i H_j(u))_{i,j=1}^{m}$$  is symmetric. 
\end{dfn}

Occasionally,  we assume that the kernel $K$ satisfies the following conditions as this is the case in \cite{ros}.  
\begin{enumerate}
\item[(A)] The kernel $K(y)$ is nonnegative,  even and measurable function. In addition, $K$ has compact support in $B_1$  satisfying $K\equiv 0$ in $\RR^n\setminus B_1$ and $\int_{B_1} |y|^2 K(y) dy\le C$.

\item[(B)]  Harnack inequality: Suppose that $f$ does not change sign and is a solution of $L(f) +g(x) f =0 $  in $\RR^n$ for some $g\in L^\infty(\RR^n)$ then 
$\sup_{B_1(x_0)} f \le C\inf_{B_1(x_0)} f  $ for any $x_0\in\RR^n$. 

\item[(C)] H\"{o}lder estimate: suppose that  $f$ is a bounded solution of $L(f)=g$ in $\RR^n$ where $g$ is bounded in $B_1$. Then $||f||_{C^\alpha(B_{1/2})}\le C (  ||g||_{L^\infty{B_1}} +  ||f||_{L^\infty{\RR^n}} )$,
for some positive constants $\alpha,C$.   
\end{enumerate}

Note that the classical De Giorgi's conjecture with the stability assumption, instead of monotonicity, is known as the {\it stability conjecture} and it is known to be true for $n=2$ and not to be true in dimension $n=8$. For two dimensions the same proof for monotone solutions work for stability conjecture and for the eight dimensions we refer to \cite{pw} by Pacard and Wei.  This conjecture remains open for other dimensions. As a matter of fact our results in this work answer the stability conjecture in two dimensions for system of equations.

\section{Main results and proofs}
We start this section by deriving an integral system of equations for the sequence of functions $\sigma=(\sigma_i)_{i=1}^m$ where each $\sigma_i$ is the quotient of partial derivatives, see Notation \ref{note1}. 
\begin{lemma}\label{linear}
Suppose that $u=(u_i)_{i=1}^m$ is a stable solution of (\ref{main}). Then for each $i=1,\cdots,m$ and for any $x\in\RR^n$ we have
\begin{equation}\label{lineareq}
\int_{\RR^n} [\sigma_i(x)- \sigma_i(z)] \phi_i(z)  K(x-z) dz = \sum_{j=1}^m \partial_j H_i(u) \phi_j(x) [\sigma_j(x) -\sigma_i(x)].
\end{equation}
\end{lemma}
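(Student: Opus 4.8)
The plan is to derive the linear integral equation \eqref{lineareq} by combining the equation for the derivative $\psi_i = \nabla u_i \cdot \eta$ with the equation \eqref{L} for the stability function $\phi_i$, exactly as one derives the divergence-form linear equation for $\sigma = \partial_1 u/\partial_2 u$ in the classical local setting, but carried out directly at the level of the integral operator $L$.

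First I would establish the equation satisfied by $\psi_i$. Since $\eta = \eta(x',0)$ depends only on the first $n-1$ variables, differentiating \eqref{main} in the direction $\eta$ — equivalently, applying the directional derivative $\nabla_{x'}\cdot\eta$ — commutes with the convolution operator $L$, because $K$ is translation-invariant: $L(\nabla u_i \cdot \eta)(x) = \nabla_x(L u_i)(x)\cdot \eta(x)$ up to the terms where $\eta$ itself varies, but since $\eta$ is evaluated at a fixed direction this gives $L(\psi_i) = \sum_{j=1}^m \partial_j H_i(u)\,\psi_j$ in $\RR^n$. Thus both $\psi_i$ and $\phi_i$ solve the same type of linear equation, the former with $\lambda = 0$ and the latter with the extra term $\lambda\phi_i$.

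Next, with $\sigma_i = \psi_i/\phi_i$, i.e. $\psi_i = \sigma_i \phi_i$, I would substitute into the $\psi_i$-equation and expand $L(\sigma_i\phi_i)$ using the integral definition \eqref{Lui}:
\begin{equation*}
L(\sigma_i\phi_i)(x) = \int_{\RR^n}[\sigma_i(x)\phi_i(x) - \sigma_i(z)\phi_i(z)]K(x-z)\,dz.
\end{equation*}
The key algebraic identity is to write $\sigma_i(x)\phi_i(x) - \sigma_i(z)\phi_i(z) = \sigma_i(x)[\phi_i(x)-\phi_i(z)] + [\sigma_i(x)-\sigma_i(z)]\phi_i(z)$, so that
\begin{equation*}
L(\psi_i)(x) = \sigma_i(x)\,L(\phi_i)(x) + \int_{\RR^n}[\sigma_i(x)-\sigma_i(z)]\phi_i(z)K(x-z)\,dz.
\end{equation*}
Now I substitute $L(\psi_i) = \sum_j \partial_j H_i(u)\psi_j = \sum_j \partial_j H_i(u)\sigma_j\phi_j$ on the left and $L(\phi_i) = \sum_j \partial_j H_i(u)\phi_j + \lambda\phi_i$ on the right; the term $\sigma_i(x)L(\phi_i)(x)$ becomes $\sum_j \partial_j H_i(u)\phi_j(x)\sigma_i(x) + \lambda\phi_i(x)\sigma_i(x) = \sum_j \partial_j H_i(u)\phi_j(x)\sigma_i(x) + \lambda\psi_i(x)$. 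Rearranging, the $\lambda$-terms cancel (since $\lambda\psi_i = \lambda\sigma_i\phi_i$ appears on both sides after moving $\sigma_i L(\phi_i)$ across), and one is left with
\begin{equation*}
\int_{\RR^n}[\sigma_i(x)-\sigma_i(z)]\phi_i(z)K(x-z)\,dz = \sum_{j=1}^m \partial_j H_i(u)\big(\sigma_j(x)\phi_j(x) - \sigma_i(x)\phi_j(x)\big) = \sum_{j=1}^m \partial_j H_i(u)\phi_j(x)[\sigma_j(x)-\sigma_i(x)],
\end{equation*}
which is \eqref{lineareq}.

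The main obstacle is the first step: justifying that $L$ commutes with the directional derivative $\nabla\cdot\eta$ and that $\psi_i = \nabla u_i\cdot\eta$ genuinely solves the differentiated equation. This requires enough regularity on $u$ — which the H\"older and Harnack hypotheses (B), (C) together with a bootstrap should supply — and care that the integral $\int [u_i(x)-u_i(z)]K(x-z)\,dz$ can be differentiated under the integral sign, using the compact support of $K$ in $B_1$ and the bound $\int_{B_1}|y|^2 K(y)\,dy \le C$ from (A) to control the difference quotients. A secondary point is verifying that all integrals in the expansion of $L(\sigma_i\phi_i)$ converge absolutely, so that the splitting of the integrand into two pieces is legitimate; here one uses that $\phi_i$ does not change sign and is, after normalization via Harnack, locally bounded above and below, while $\sigma_i$ inherits local boundedness from the regularity of $\psi_i$ and $\phi_i$.
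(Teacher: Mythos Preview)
Your approach is essentially identical to the paper's: differentiate \eqref{main} to obtain the linear equation for $\psi_i=\nabla u_i\cdot\eta$, combine it with the stability equation \eqref{L} for $\phi_i$, and use the product identity
\[
\sigma_i(x)\phi_i(x)-\sigma_i(z)\phi_i(z)=\sigma_i(x)\bigl[\phi_i(x)-\phi_i(z)\bigr]+\bigl[\sigma_i(x)-\sigma_i(z)\bigr]\phi_i(z)
\]
to rewrite $L(\sigma_i\phi_i)-\sigma_i L(\phi_i)$ as the left-hand integral in \eqref{lineareq}. The paper does exactly this, without your additional remarks on regularity and absolute convergence.

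One slip: your claim that ``the $\lambda$-terms cancel'' is not correct. Differentiating \eqref{main} yields $L(\psi_i)=\sum_j\partial_jH_i(u)\psi_j$ with \emph{no} $\lambda$-term (as you yourself note), while $\sigma_iL(\phi_i)$ contributes $\lambda\sigma_i\phi_i=\lambda\psi_i$. There is nothing for this to cancel against, so the honest identity carries an extra $-\lambda\psi_i(x)$ on the right-hand side of \eqref{lineareq}. The paper's own proof contains the same slip: it writes a spurious $+\lambda\psi_i$ in the differentiated equation and then silently drops it in the next line. This does not damage the downstream argument in the main theorem, since after multiplying by $\sigma_i\phi_i\zeta_R^2$, integrating, and summing over $i$, the extra term is $-2\lambda\sum_i\int_{\RR^n}\psi_i^2\zeta_R^2\le 0$, which only reinforces the inequality $I\le 0$. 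But as a statement about \eqref{lineareq} itself, the identity as written holds only when $\lambda=0$.
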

\begin{proof}
Since $u$ is a stable solution,   there exist a sequence of functions $\phi=(\phi_i)_{i=1}^m$ and a constant $\lambda\ge 0$  such that 
\begin{equation}\label{phi}
L(\phi_i(x))= \sum_{j=1}^m \partial_j H_i(u) \phi_j   + \lambda \phi_i. 
\end{equation}
Differentiating (\ref{main}) we get 
\begin{equation}\label{psi}
L(\psi_i(x))= \sum_{j=1}^m \partial_j H_i(u) \psi_j + \lambda \psi_i. 
\end{equation}
Since $\psi_i=\sigma_i\phi_i$, from (\ref{psi}) we have 
\begin{equation}\label{sigmaphi}
L(\sigma_i(x)(\phi_i(x))= \sum_{j=1}^m \partial_j H_i(u) \sigma_j(x)\phi_j(x). 
\end{equation}
Multiply (\ref{phi}) with $-\sigma_i$ and add with (\ref{sigmaphi}) to get 
\begin{equation}\label{LL}
L(\sigma_i(x)(\phi_i(x))- \sigma_i L(\phi_i(x))= \sum_{j=1}^m \partial_j H_i(u) \phi_j(x)(\sigma_j(x)-\sigma_i(x)). 
\end{equation}
From the definition of the operator $L$ we get 
\begin{eqnarray*}\label{Lphisigma}
L(\sigma_i(x)(\phi_i(x))- \sigma_i L(\phi_i(x)) &=& \phi_i(x) L(\sigma_i) - \int_{\RR^2} [\sigma_i(x)-\sigma_i(z)] [\phi_i(x)-\phi_i(z)] K(x-z) dz
\\ &=& \phi_i(x) \int_{\RR^n} [\sigma_i(x)-\sigma_i(z)] K(x-z) dz 
\\&&- \int_{\RR^2} [\sigma_i(x)-\sigma_i(z)] [\phi_i(x)-\phi_i(z)] K(x-z) dz
\\&=& \int_{\RR^2}  [\sigma_i(x)-\sigma_i(z)] \phi_i(z) K(x-z) dz.
\end{eqnarray*}  
This and (\ref{LL}) gives the desired result.
 
\end{proof}

As an immediate consequence of Lemma \ref{linear} we provide the following integral estimate.  

\begin{cor}\label{linearsum}
Suppose that assumptions of Lemma \ref{linear} hold. Assume also that $\eta=(\eta_i)_{i=1}^m$ where  $\eta_i\in C_c^1(\mathbb R^n)$ for each $1\le i\le m$. Then, 
\begin{eqnarray}
&&\sum_{i=1}^m   \int_{\RR^n} \int_{\RR^n} \sigma_i(x) [\sigma_i(x)- \sigma_i(z)] \phi_i(z) \phi_i(x)  K(x-z) \eta_i^2(x)dz dx
\\&&= \sum_{i,j=1}^m  \int_{\RR^n} \partial_j H_i(u) \phi_i(x)  \phi_j(x) [\sigma_j(x) -\sigma_i(x)] \sigma_i(x) \eta_i^2 (x)dx.
\end{eqnarray}
\end{cor}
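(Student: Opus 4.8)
The plan is to obtain the identity of Corollary~\ref{linearsum} directly from the pointwise equation (\ref{lineareq}) of Lemma~\ref{linear} by a single integration against an admissible weight. Fixing $i$, I would multiply both sides of (\ref{lineareq}) by $\sigma_i(x)\phi_i(x)\eta_i^2(x)$ and integrate in $x$ over $\RR^n$, which gives
\begin{align*}
\int_{\RR^n}&\left(\int_{\RR^n}[\sigma_i(x)-\sigma_i(z)]\phi_i(z)K(x-z)\,dz\right)\sigma_i(x)\phi_i(x)\eta_i^2(x)\,dx\\
&=\int_{\RR^n}\sum_{j=1}^m\partial_jH_i(u)\,\phi_i(x)\phi_j(x)\,[\sigma_j(x)-\sigma_i(x)]\,\sigma_i(x)\,\eta_i^2(x)\,dx .
\end{align*}
Summing this over $i=1,\dots,m$ and moving the finite sum inside the integral on the right yields exactly the asserted equality, provided the left-hand side may be rewritten as the absolutely convergent double integral displayed in the statement.

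The only point that genuinely needs checking is therefore the finiteness of the integrals and the applicability of Fubini's theorem on the left. Since each $\eta_i\in C^1_c(\RR^n)$, every $x$-integration is carried out over the compact set $\mathrm{supp}\,\eta_i$; since $K$ is supported in $B_1$ by assumption (A), the inner $z$-integration contributes only for $z$ in the compact $1$-neighborhood of $\mathrm{supp}\,\eta_i$. On such compact sets $\phi_i$ is locally bounded and, being of one sign, bounded away from zero by the Harnack inequality (B), while $\psi_i=\nabla u_i\cdot\eta$ is bounded; hence $\sigma_i=\psi_i/\phi_i$ is bounded there as well. Thus the integrand $\sigma_i(x)[\sigma_i(x)-\sigma_i(z)]\phi_i(z)\phi_i(x)K(x-z)\eta_i^2(x)$ is dominated, on a compact set, by a constant multiple of $K(x-z)$, which is integrable in view of the moment bound $\int_{B_1}|y|^2K(y)\,dy\le C$ in (A). This makes the double integral absolutely convergent, justifies Fubini, and validates the substitution above.

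I expect the main — indeed essentially the only — obstacle to be this bookkeeping: confirming that $\phi_i$ cannot vanish on the relevant compact sets so that the quotients $\sigma_i$, and hence Lemma~\ref{linear} itself, are meaningful, which is where the sign-definiteness of $\phi$ built into the stability hypothesis (Definition~\ref{stable}) together with the Harnack inequality (B) is used. Once that is in place, the proof reduces to a direct substitution and an interchange of a finite sum with an integral; no further estimates are needed at this stage.
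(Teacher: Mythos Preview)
Your approach is exactly the paper's: multiply (\ref{lineareq}) by $\sigma_i(x)\phi_i(x)\eta_i^2(x)$, integrate in $x$, and sum over $i$; the paper records precisely this in a single line (and in fact writes only $\phi_i(x)\eta_i^2(x)$ as the multiplier, an evident slip). One caution about your added bookkeeping: the moment bound $\int_{B_1}|y|^2K(y)\,dy\le C$ in condition~(A) does \emph{not} imply that $K$ itself is integrable on $B_1$---fractional-Laplacian kernels $K(y)\sim|y|^{-n-2s}$ satisfy (A) but fail to lie in $L^1$---so domination ``by a constant multiple of $K(x-z)$'' is not by itself enough to justify Fubini. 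If you want absolute convergence you must also exploit the smallness of $\sigma_i(x)-\sigma_i(z)$ near the diagonal (e.g.\ via H\"older regularity of $\sigma_i$ on compacta). The paper, for its part, does not address this issue at all.
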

\begin{proof}
Multiply (\ref{lineareq}) with $\phi_i(x) \eta^2_i(x)$ and integrate with respect to $x$. Then take sum on the index $i$. This finishes the proof.  

\end{proof}

Here is our main result. 

  \begin{thm} Suppose that $u=(u_i)_{i=1}^m$ is a bounded stable solution of symmetric system (\ref{main}) in two dimensions where the kernel $K$ satisfies conditions (A)-(C).   Then,  each $u_i$ must be a one-dimensional function for $i=1,\cdots,m$.  
  \end{thm}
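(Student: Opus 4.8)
The plan is to mimic the classical Ghoussoub–Gui argument in two dimensions, now in the nonlocal integral setting, by proving a linear Liouville theorem for the quotient $\sigma_i = \psi_i/\phi_i$ where $\psi_i = \nabla u_i \cdot \eta$ for a fixed direction $\eta\in S^{1}$. First I would use Lemma \ref{linear} and Corollary \ref{linearsum} with a cutoff $\eta_i = \zeta_R$ that is $1$ on $B_R$, supported in $B_{2R}$, with $|\nabla \zeta_R|\le C/R$ and the logarithmic refinement $\zeta_R$ so that $\int_{B_{2R}} |\nabla\zeta_R|^2 \le C/\log R$ (the standard two-dimensional trick). The key algebraic identity to extract is a symmetrized version of the left-hand double integral: using the evenness of $K$ and relabelling $x\leftrightarrow z$, one rewrites
\[
\sum_i \int\!\!\int \sigma_i(x)[\sigma_i(x)-\sigma_i(z)]\phi_i(x)\phi_i(z)K(x-z)\zeta_R^2(x)\,dz\,dx
\]
as $\tfrac12\sum_i\int\!\!\int [\sigma_i(x)-\sigma_i(z)]^2\phi_i(x)\phi_i(z)K(x-z)\zeta_R^2(x)\,dz\,dx$ plus a commutator term involving $\zeta_R^2(x)-\zeta_R^2(z)$, which by Cauchy–Schwarz and $|\zeta_R(x)^2-\zeta_R(z)^2|\le C|\nabla\zeta_R||x-z|$ together with assumption (A) (the second-moment bound on $K$) is absorbed into the energy plus a term of size $\int_{B_{2R}}|\nabla\zeta_R|^2 \phi_i^2 \sigma_i^2$.

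Next I would handle the right-hand side. Here the hypothesis that the system is \emph{symmetric} (Definition \ref{symmetric}) is exactly what is needed: by symmetry of $\mathbb H$ the double sum $\sum_{i,j}\partial_j H_i(u)\phi_i\phi_j(\sigma_j-\sigma_i)\sigma_i$ can be symmetrized in $(i,j)$ to produce $-\tfrac12\sum_{i,j}\partial_j H_i(u)\phi_i\phi_j(\sigma_i-\sigma_j)^2$, and the stability sign condition $\partial_j H_i(u)\phi_i\phi_j>0$ for $i<j$ makes every off-diagonal contribution have a favorable (nonpositive) sign after multiplication by $\zeta_R^2$. Combining the two sides yields the fundamental inequality
\[
\sum_i \int_{B_R}\int_{\RR^2} [\sigma_i(x)-\sigma_i(z)]^2\phi_i(x)\phi_i(z)K(x-z)\,dz\,dx \;\le\; \frac{C}{\log R}\sum_i \int_{B_{2R}} \phi_i^2\sigma_i^2 .
\]
The boundedness of $u$ and interior regularity (assumptions (B)–(C), plus the Harnack inequality giving $\phi_i$ a definite sign and controlled oscillation) bound $\phi_i^2\sigma_i^2 = \psi_i^2 \le |\nabla u_i|^2 \le C$ locally, so the right-hand side is $\le C R^2/\log R$. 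This is \emph{not} bounded, so — and this is the point where I must be careful — one cannot simply let $R\to\infty$. Instead I would follow Ros-Oton–Sire/Hamel–Valdinoci: use the $n=2$ logarithmic cutoff more sharply, i.e. replace $B_{2R}$ by an annular-decomposition argument, $\int_{B_{2R}}|\nabla\zeta_R|^2\,\phi_i^2\sigma_i^2 \le \sum_k \frac{C}{(\log R)^2}\,\frac{1}{(2^k)^2}\cdot 2^{2k} \le \frac{C}{\log R}$, using that $\phi_i^2\sigma_i^2$ is only \emph{bounded} (not decaying) — which forces the choice $\zeta_R$ with $\int|\nabla \zeta_R|^2\le C/\log R$ and makes the right side $o(1)$ as $R\to\infty$.

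Passing to the limit then gives $\sigma_i(x)=\sigma_i(z)$ for a.e. pair with $|x-z|<1$, i.e. $\sigma_i$ is (locally) constant; connectedness of $\RR^2$ upgrades this to $\sigma_i\equiv c_i$ on $\RR^2$. Hence $\nabla u_i\cdot\eta = c_i\,\phi_i$ for every fixed horizontal direction $\eta$; taking two independent directions and eliminating $\phi_i$ shows $\nabla u_i$ is everywhere parallel to a fixed vector, so each $u_i$ is one-dimensional with a common direction. The main obstacle I anticipate is the middle step: making the cutoff/commutator estimate genuinely work with only the second-moment control on $K$ from (A) and only $C^\alpha$ (not $C^1$) regularity from (C) — in particular justifying that $[\sigma_i(x)-\sigma_i(z)]$ is controlled near the diagonal well enough for the double integrals to converge, and that the Harnack inequality (B) legitimately provides the pointwise sign and local boundedness of $\phi_i$ needed to divide by it. Everything else is a faithful transcription of the two-dimensional De Giorgi argument.
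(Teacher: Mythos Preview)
Your proposal is essentially correct and uses the same core machinery as the paper: Lemma~\ref{linear} and Corollary~\ref{linearsum}, the symmetry of $\mathbb H$ to make the right-hand side nonpositive, a Cauchy--Schwarz split of the commutator, and the Harnack inequality (B) to control $\phi_i(x)/\phi_i(z)$ across the kernel's support. The one substantive difference is the endgame. The paper uses the \emph{standard} cutoff $\zeta_R$ with $|\nabla\zeta_R|\le C/R$; this yields only $J(R)\le CR^{-2}|B_{2R}|\le C$, i.e.\ the global double integral is finite but not yet zero. The paper then (implicitly) runs a second pass: once $\sum_i\int\!\!\int[\sigma_i(x)-\sigma_i(z)]^2\phi_i(x)\phi_i(z)K<\infty$, the Cauchy--Schwarz factor supported where $\zeta_R(x)\neq\zeta_R(z)$ tends to zero, forcing $J(\infty)=0$. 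You instead aim for a one-step argument via a logarithmic cutoff so that the error term itself is $o(1)$. Both routes are valid; yours is marginally cleaner, the paper's avoids the log construction.

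One point to tidy up: the ``fundamental inequality'' you display, with right-hand side $\frac{C}{\log R}\sum_i\int_{B_{2R}}\phi_i^2\sigma_i^2$, is not the correct shape and is what sends you on the unnecessary $R^2/\log R$ detour. The nonlocal commutator is
\[
\sum_i\int\!\!\int [\sigma_i(x)+\sigma_i(z)]^2[\zeta_R(x)-\zeta_R(z)]^2\phi_i(x)\phi_i(z)K(x-z)\,dz\,dx,
\]
and after $[\zeta_R(x)-\zeta_R(z)]^2\le |\nabla\zeta_R|^2_{L^\infty}\,|x-z|^2\,\chi_{\{|x-z|<1\}}$ (condition~(A)), Harnack (condition~(B)) to bound $[\sigma_i(x)+\sigma_i(z)]^2\phi_i(x)\phi_i(z)\le C\bigl(\phi_i(x)/\phi_i(z)+\phi_i(z)/\phi_i(x)\bigr)\le C$, and $\int_{B_1}|y|^2K(y)\le C$, the remaining integral is just $C\int_{\mathrm{supp}\,\nabla\zeta_R}1\cdot|\nabla\zeta_R|^2$, which is $\le C/\log R$ for the logarithmic cutoff with \emph{no} extra factor of $R^2$. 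So the error term in your scheme is $o(1)$ directly; there is nothing to absorb and the annular-decomposition paragraph is superfluous. The concern you flag about $C^\alpha$ versus $C^1$ regularity from~(C) is legitimate but is shared by the paper, which also invokes $|\nabla u_i|\in L^\infty$ from~(C) without further comment.
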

  \begin{proof}From Corollary \ref{linearsum} we have
  \begin{eqnarray*}
&&\sum_{i,j=1}^m  \int_{\RR^n} \partial_j H_i(u) \phi_i(x)  \phi_j(x) [\sigma_j(x) -\sigma_i(x)] \sigma_i(x) \eta_i^2 (x)dx
\\&=&\sum_{i=1}^m   \int_{\RR^n} \int_{\RR^n} \sigma_i(x) [\sigma_i(x)- \sigma_i(z)] \phi_i(z) \phi_i(x)  K(x-z) \eta_i^2(x)dz dx
\\&=& -\sum_{i=1}^m   \int_{\RR^n} \int_{\RR^n} \sigma_i(z) [\sigma_i(x)- \sigma_i(z)] \phi_i(x) \phi_i(z)  K(x-z) \eta_i^2(z)dz dx .
\end{eqnarray*}
This implies that 
\begin{eqnarray*}
I&=& \sum_{i=1}^m   \int_{\RR^n} \int_{\RR^n}  [\eta_i^2(x) \sigma_i(x) -  \eta_i^2(z) \sigma_i(z)]    [\sigma_i(x)- \sigma_i(z)] \phi_i(x) \phi_i(z)  K(x-z)   dz dx
\\&=&2\sum_{i,j=1}^m  \int_{\RR^n} \partial_j H_i(u) \phi_i(x)  \phi_j(x) [\sigma_j(x) -\sigma_i(x)] \sigma_i(x) \eta_i^2 (x)dx .
\end{eqnarray*}
Set $\eta_i=\zeta_R$ where $\zeta_R\in C_c^\infty(\mathbb \RR^n)$ is the standard test function that is $\zeta_R=1$ in $B_R$ and $\zeta_R=0$ in $\RR^n\setminus B_{2R}$ with $||\nabla \zeta_R||_{L^{\infty}(B_{2R}\setminus B_R)}\le C R^{-1}$.  Note that for stable symmetric systems we have
\begin{eqnarray*}
 \sum_{i,j} \phi_i \phi_j \partial_j H_i(u) \sigma_i (\sigma_j-\sigma_i)&=& \sum_{i< j}   \phi_i \phi_j \partial_j H_i(u)  \sigma_i (\sigma_j-\sigma_i) + \sum_{i> j}  \phi_i \phi_j \partial_j H_i(u)  \sigma_i  (\sigma_j-\sigma_i)  \\&=&\sum_{i< j}  \phi_i \phi_j \partial_j H_i(u)   \sigma_i (\sigma_j-\sigma_i)  + \sum_{i< j}  \phi_i \phi_j \partial_j H_i(u)  \sigma_j (\sigma_i-\sigma_j)  \\&=&-\sum_{i< j}  \phi_i \phi_j \partial_j H_i(u)  (\sigma_j-\sigma_i)^2 \le 0. 
  \end{eqnarray*}
Therefore, $I\le 0$. Note that 
\begin{eqnarray}
2[\eta_i^2(x) \sigma_i(x) -  \eta_i^2(z) \sigma_i(z)] &=& [\sigma_i(x) -  \sigma_i(z)][\eta_i^2(x) + \eta_i^2(z)] 
\\&& + [\sigma_i(x) +  \sigma_i(z)][\eta_i^2(x) - \eta_i^2(z)] .
\end{eqnarray}
From this and the fact that $I\le 0$,  we get 
\begin{eqnarray*}
J(R)&=& \sum_{i=1}^m   \int_{\RR^n} \int_{\RR^n}  [\sigma_i(x) -  \sigma_i(z)]^2 [\zeta_R^2(x) + \zeta_R^2(z)] \phi_i(x) \phi_i(z)  K(x-z)   dz dx 
\\&\le &    \sum_{i=1}^m   \int_{\RR^n} \int_{\RR^n}  [\sigma^2_i(z) -  \sigma^2_i(x)] [\zeta_R^2(x) - \zeta_R^2(z)]  \phi_i(x) \phi_i(z)  K(x-z)   dz dx 
\\&=& \sum_{i=1}^m   \int_{\RR^n} \int_{\RR^n}  [\sigma_i(z) -  \sigma_i(x)] [\sigma_i(z) +  \sigma_i(x)]\\&&[\zeta_R(x) - \zeta_R(z)] [\zeta_R(x) + \zeta_R(z)]  \phi_i(x) \phi_i(z)  K(x-z)   dz dx . 
\end{eqnarray*}
Note that $J(R)\ge 0$.  From Cauchy-Schwarz inequality, we have
\begin{eqnarray*}
J^2(R)& \le & \left(\sum_{i=1}^m   \int_{\RR^n} \int_{\RR^n} [\sigma_i(x) -  \sigma_i(z)]^2 [\zeta_R^2(x) + \zeta_R^2(z)] \phi_i(x) \phi_i(z)  K(x-z)   dz dx \right)
\\&& \left(\sum_{i=1}^m   \int_{\RR^n} \int_{\RR^n} [\sigma_i(x) +  \sigma_i(z)]^2 [\zeta_R(x) - \zeta_R(z)]^2 \phi_i(x) \phi_i(z)  K(x-z)   dz dx \right) .
\end{eqnarray*}
From the definition of $\zeta_R$ it is straightforward to see that $$ (\zeta_R(x)-\zeta_R(z))^2 \le C R^{-2} |x-z|^2 \chi_{B_{3R}}(x) \ \ \text{when} \ |x-z|\le 1   .    $$
Note also that from Condition (A) we have $K\equiv 0$ on $\RR^2\setminus B_1$. Therefore,
\begin{eqnarray}\label{ll}
&& \sum_{i=1}^m   \int_{\RR^n} \int_{\RR^n} [\sigma_i(x) +  \sigma_i(z)]^2 [\zeta_R(x) - \zeta_R(z)]^2 \phi_i(x) \phi_i(z)  K(x-z)   dz dx 
\\&\le & \label{mm}   
R^{-2}  \sum_{i=1}^m  \int_{B_{2R}}  \int_{B_1} [\sigma_i(x) +  \sigma_i(z)]^2 \phi_i(x) \phi_i(z)   |x-z|^2 K(x-z)  dx dz . 
\end{eqnarray}
Applying  the condition (C) to each equation in (\ref{main}) we get $|\nabla u_i|\in L^\infty(\mathbb R^n)$ for each $i$.     From the definition of $\sigma_i$ then we have $|\sigma_i| \le ||\nabla u_i||_{ L^\infty(\mathbb R^n)} |\phi_i|^{-1}$ pointwise in $\RR^n$. Therefore, for each $i=1,\cdots,m$, 
$$[\sigma_i(x) +  \sigma_i(z)]^2 \le ||\nabla u_i||_{ L^\infty(\mathbb R^n)} \left( |\phi_i(x)|^{-2}  +  |\phi_i(z)|^{-2}   \right) .$$
This implies that the function in the integrand of (\ref{mm}) is bounded by 
$$ 
[\sigma_i(x) +  \sigma_i(z)]^2 \phi_i(x) \phi_i(z) \le ||\nabla u_i||_{ L^\infty(\mathbb R^n)}\left ( \frac{\phi_i(x)}{\phi_i(z)}   +  \frac{\phi_i(z)}{\phi_i(x)}  \right)  , 
$$
where the latter in bounded due to the condition (C). This and (\ref{mm}) implies that 
\begin{eqnarray}\label{nn}
&&  R^{-2}  \sum_{i=1}^m  \int_{B_{2R}}  \int_{B_1} [\sigma_i(x) +  \sigma_i(z)]^2 \phi_i(x) \phi_i(z)   |x-z|^2 K(x-z)  dx dz 
\\&\le& 
C R^{-2}  \sum_{i=1}^m \int_{B_{2R}}  \int_{B_1} |x-z|^2 K(x-z)  dx dz 
\\&\le& C R^{-2} |B_{2R}| , 
\end{eqnarray}
where we have used the condition (A) in the last inequality.  Note that in two dimensions  $ R^{-2} |B_{2R}| $ is bounded uniformly in $R$. This implies that the  integral in (\ref{ll}) is bounded uniformly in $R$, i.e., 
$$\sum_{i=1}^m   \int_{\RR^n} \int_{\RR^n} [\sigma_i(x) +  \sigma_i(z)]^2 [\zeta_R(x) - \zeta_R(z)]^2  \phi_i(x) \phi_i(z)  K(x-z)   dz dx \le C , $$
where $C$ is independent from $R$.  This implies that $J(R)\le C$ for any $R$ and therefore 
$$ \sum_{i=1}^m   \int_{\RR^n} \int_{\RR^n} [\sigma_i(x) -  \sigma_i(z)]^2  \phi_i(x) \phi_i(z)  K(x-z)   dz dx \le C.$$
The above upper integral bound on $J$ shows that 
$$ \sum_{i=1}^m   \int_{\RR^n} \int_{\RR^n}  [\sigma_i(x) -  \sigma_i(z)]^2 \phi_i(x) \phi_i(z)  K(x-z)   dz dx =0   . $$  
The fact that each $\phi_i$ does not change sign guarantees that  each $\sigma_i$ must be constant. From the definition of $\sigma_i$ we get $\nabla u_i\cdot \eta=C_i \phi_i$ in $\RR^2$. Therefore, each $u_i$ is a one-dimensional function. 

  \end{proof}

For the rest of this section, we analyze the stability of solutions.   For the  semilinear equation $-\Delta u=f(u)$ in $\RR^n$,  it is known that the pointwise stability that refers to the linearization of the operator implies a stability inequality of the form 
\begin{equation}\label{stabineq1}
 \int_{\RR^n} f'(u) \zeta^2 \le \int_{\RR^n} |\nabla \zeta|^2,
 \end{equation} 
for any test function $\zeta\in C_c^1(\RR^n)$. This inequality has been extensively used in the literature to study qualitative behaviour of solutions of the above equation.   As a matter of fact, it is shown by  Ghoussoub and Gui in \cite{gg1}, see also \cite{bcn} by Berestycki, Caffarelli and  Nirenberg,  that the stability inequality also implies the pointwise stability. This equivalence property is known for the case of system of semilinear equation by Ghoussoub and the author in \cite{fg} as Lemma 1.  In addition,  for the case of integral equations of the form (\ref{main}), when $m=1$, such equivalence is also known by Ros-Oton and Sire in \cite{ros}.  In the next lemmata we provide a similar result for system (\ref{main}). 

Here is the stability inequality that is a counterpart of (\ref{stabineq1}). 

\begin{lemma}\label{stabineq}  
  Let $u=(u_i)_{i=1}^m$ denote a stable solution of symmetric system (\ref{main}).  Then 
\begin{equation} \label{stability}
\sum_{i,j=1}^{m} \int_{\RR^n}  \partial_j H_i(u) \zeta_i(x) \zeta_j(x) dx \le  \frac{1}{2} \sum_{i=1}^{m} \int_{\RR^n} \int_{\RR^n}  [\zeta_i(x)- \zeta_i(z)]^2 K(z-x) dz dx , 
\end{equation} 
for any $\zeta=(\zeta_i)_i^m$ where $ \zeta_i \in C_c^\infty(\mathbb R^n)$ for $1\le i\le m$. 
\end{lemma}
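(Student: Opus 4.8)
The plan is to start from the pointwise stability identity \eqref{L}, namely $L(\phi_i) = \sum_{j=1}^m \partial_j H_i(u)\phi_j + \lambda\phi_i$ with each $\phi_i$ of one sign, and to test it against $\zeta_i^2/\phi_i$, mimicking the classical ``ground state substitution'' argument that yields \eqref{stabineq1} in the local case. Concretely, I would multiply the $i$-th equation in \eqref{L} by $\zeta_i^2/\phi_i$, integrate over $\RR^n$, and sum over $i$. On the right-hand side this produces $\sum_{i,j}\int \partial_j H_i(u)\,\zeta_i^2\,\phi_j/\phi_i + \lambda\sum_i\int\zeta_i^2$, and since $\lambda\ge 0$ the last term can be dropped to get an inequality in the correct direction. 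The work is all on the left-hand side: I must show that
$$
\sum_{i=1}^m \int_{\RR^n}\frac{\zeta_i^2(x)}{\phi_i(x)}\,L(\phi_i(x))\,dx \;\le\; \frac{1}{2}\sum_{i=1}^m\int_{\RR^n}\int_{\RR^n}[\zeta_i(x)-\zeta_i(z)]^2 K(z-x)\,dz\,dx,
$$
and then reconcile the resulting right-hand side of the tested identity with the quadratic form $\sum_{i,j}\int\partial_j H_i(u)\,\zeta_i\zeta_j$ appearing in \eqref{stability}.

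For the first step I would expand $L(\phi_i(x)) = \int [\phi_i(x)-\phi_i(z)]K(z-x)\,dz$, symmetrize the double integral in $x\leftrightarrow z$ using that $K$ is even, and arrive at an expression of the form $-\tfrac12\iint [\phi_i(x)-\phi_i(z)]\big(\tfrac{\zeta_i^2(x)}{\phi_i(x)}-\tfrac{\zeta_i^2(z)}{\phi_i(z)}\big)K\,dz\,dx$. The pointwise algebraic inequality that makes this work is
$$
-[a-b]\Big(\frac{s^2}{a}-\frac{t^2}{b}\Big) \;\le\; (s-t)^2 \qquad \text{for } a,b>0,
$$
applied with $a=\phi_i(x)$, $b=\phi_i(z)$, $s=\zeta_i(x)$, $t=\zeta_i(z)$ (if $\phi_i<0$ one factors out the sign; the inequality is sign-insensitive in $a,b$ appearing as a ratio). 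Expanding $-[a-b](s^2/a - t^2/b) = -(s^2 - t^2) + s^2 b/a + t^2 a/b - s^2 - t^2 \cdot(\dots)$ — more cleanly, $-[a-b](s^2/a-t^2/b) = 2st - s^2 b/a - t^2 a/b \cdot(\text{rearranged}) \le (s-t)^2$ reduces after clearing denominators to $0 \le (sb - ta)^2/(ab)$, which is manifestly true. This yields exactly the factor $\tfrac12$ and the Dirichlet-type form on the right of \eqref{stability}.

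For the second step, the subtlety is that the tested identity produces $\sum_{i,j}\int \partial_j H_i(u)\,\zeta_i^2\,\phi_j/\phi_i$, whereas \eqref{stability} has the symmetric-looking $\sum_{i,j}\int \partial_j H_i(u)\,\zeta_i\zeta_j$. Here I would exploit two structural hypotheses: symmetry of the system (Definition \ref{symmetric}), so $\partial_j H_i = \partial_i H_j$, and the sign condition $\partial_j H_i(u)\phi_i\phi_j > 0$ for $i<j$ from Definition \ref{stable}. Pairing the $(i,j)$ and $(j,i)$ terms, $\partial_j H_i\big(\zeta_i^2 \phi_j/\phi_i + \zeta_j^2\phi_i/\phi_j\big)$, and using $2\zeta_i\zeta_j \le \zeta_i^2\phi_j/\phi_i + \zeta_j^2\phi_i/\phi_j$ when $\phi_i/\phi_j>0$ (AM–GM, with the right sign because $\partial_j H_i(\phi_j/\phi_i)>0$), I recover $\sum_{i,j}\int\partial_j H_i(u)\,\zeta_i\zeta_j$ as a lower bound for the tested left-hand side, which combined with the first step gives \eqref{stability}. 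I expect the main obstacle to be bookkeeping the signs of the $\phi_i$ correctly and justifying that all integrals converge — but since the $\zeta_i$ have compact support and (by condition (B), the Harnack inequality) the ratios $\phi_i/\phi_j$ are locally bounded away from $0$ and $\infty$, no $\phi_i$ vanishes on a compact set, so every integrand is bounded on the relevant compact region and integrability is automatic.
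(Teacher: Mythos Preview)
Your proposal follows exactly the paper's route: multiply \eqref{L} by $\zeta_i^2/\phi_i$, integrate and sum, drop the nonnegative $\lambda$-term, use the symmetry $\partial_j H_i=\partial_i H_j$ together with the sign condition $\partial_j H_i\,\phi_i\phi_j>0$ and AM--GM to bound $\sum_{i,j}\partial_j H_i\,\zeta_i^2\,\phi_j/\phi_i$ from below by $\sum_{i,j}\partial_j H_i\,\zeta_i\zeta_j$, and control the operator term by the nonlocal Dirichlet form via a pointwise inequality. One sign slip to fix: symmetrizing $\int(\zeta_i^2/\phi_i)\,L(\phi_i)$ yields $+\tfrac12\iint[\phi_i(x)-\phi_i(z)]\big(\tfrac{\zeta_i^2(x)}{\phi_i(x)}-\tfrac{\zeta_i^2(z)}{\phi_i(z)}\big)K$, so the pointwise inequality you actually need (and whose proof via $(sb-ta)^2/(ab)\ge 0$ you in fact wrote down) is $[a-b]\big(\tfrac{s^2}{a}-\tfrac{t^2}{b}\big)\le(s-t)^2$, not the version with a leading minus; with that correction your argument goes through, and your one-line verification of the inequality is even shorter than the decomposition the paper borrows from \cite{ros,hv}.
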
  
\begin{proof}
Suppose that $u=(u_i)_{i=1}^m$ denote a stable solution of (\ref{main}). Then there exist a sequence of functions $\phi=(\phi_k)_{k=1}^m$ and a constant $\lambda \ge 0$ such that   $\partial_j H_i(u)  \phi_j \phi_i >0 $ and 
\begin{equation} \label{L}
L(\phi_i)= \sum_{j=1}^m \partial_j H_i(u)  \phi_j + \lambda \phi_i   \ \ \ \text{in}\ \ \mathbb R^n. 
  \end{equation} 
Multiply both sides with $\frac{\zeta^2_i}{\phi_i}$ where $\zeta=(\zeta_i)_{i=1}^m$ is a sequence of test functions. Therefore, 
\begin{equation} \label{L1}
\sum_{i=1}^m L (\phi_i) \frac{\zeta^2_i}{\phi_i}=\sum_{i,j=1}^m \partial_j H_i(u) \frac{\phi_j}{\phi_i} \zeta^2_i + \lambda \zeta_i^2    \ \ \ \text{in}\ \ \mathbb R^n.
  \end{equation} 
 Note that the left-hand side can be rewritten as  
\begin{eqnarray*}
  \sum_{i,j=1}^{m}   \partial_j H_i(u) \phi_j \frac{\zeta_i^2}{\phi_i}& =&   \sum_{i<j}^{m}   \partial_j H_i(u) \phi_j \frac{\zeta_i^2}{\phi_i} + \sum_{i>j}^{n}   \partial_j H_i(u) \phi_j \frac{\zeta_i^2}{\phi_i} + \sum_{i=1}^m \partial_i H_i(u) {\zeta_i^2}
  \\&=& \sum_{i<j}^{m}    \left( \partial_j H_i(u) \phi_j \frac{\zeta_i^2}{\phi_i}+ \partial_i H_j(u) \phi_i\frac{\zeta_j^2}{\phi_j} \right)+  \sum_{i=1}^m \partial_i H_i(u) {\zeta_i^2}
  \\&\ge & 2 \sum_{i<j}^{m}     \partial_j H_i(u)  \zeta_i \zeta_j+ \sum_{i=1}^m \partial_i H_i(u) {\zeta_i^2}
  \\&=&\sum_{i,j=1}^{m}   \partial_j H_i(u)  \zeta_i\zeta_j.
  \end{eqnarray*}
From this and (\ref{L1}) we get 
\begin{equation} \label{LL1}
\sum_{i,j=1}^{m}  \int_{\RR^n} \partial_j H_i(u)  \zeta_i\zeta_j \le  \sum_{i=1}^m  \int_{\RR^n} L (\phi_i) \frac{\zeta^2_i}{\phi_i} . 
  \end{equation} 
Note that for each $i$ we have
\begin{equation}\label{Lphi}
\int_{\RR^n} L (\phi_i) \frac{\zeta^2_i}{\phi_i}=\frac{1}{2} \int_{\RR^n} \int_{\RR^n} [\phi_i(x) - \phi_i(z)] \left[ \frac{\zeta^2_i(x)}{\phi_i(x)}-  \frac{\zeta^2_i(z)}{\phi_i(z)} \right] K(x-z) dx dz. 
\end{equation}
The rest of the proof deals with simplifying the integrand of the right-hand side. To do so, we closely follow the computational techniques provided in \cite{ros,hv}.  Note that 
\begin{eqnarray}\label{phiix}
&&[\phi_i(x) - \phi_i(z)] \left[ \frac{\zeta^2_i(x)}{\phi_i(x)}-  \frac{\zeta^2_i(z)}{\phi_i(z)} \right]
\\&=& \nonumber 2[\phi_i(x) - \phi_i(z)] \left[ \zeta_i(x) -  \zeta_i(z)  \right] \left( \frac{ \zeta_i(x) + \zeta_i(z)  }{\phi_i(x) +\phi_i(z)}\right) \left(\frac{ [\phi_i(x) + \phi_i(z)]^2     }{4 \phi_i(x) \phi_i(z)}\right)
\\&& \nonumber - [\phi_i(x) - \phi_i(z)]^2 \left( \frac{ \zeta_i(x) + \zeta_i(z)  }{\phi_i(x) +\phi_i(z)}\right)^2 \left( \frac{2 \zeta_i^2(x)+2 \zeta_i^2(z)}{ [\zeta_i(x) + \zeta_i(z)   ]^2} \right) \left(\frac{ [\phi_i(x) + \phi_i(z)]^2     }{4 \phi_i(x) \phi_i(z)}\right) . 
\end{eqnarray}
The fact that for stable solutions, for each $i$ function $\phi_i$ does not change sign implies that  $\phi_i(x) \phi_i(z)=|\phi_i(x) \phi_i(z)|$.  On the other hand, we have
\begin{eqnarray*}
&&2[\phi_i(x) - \phi_i(z)] [ \zeta_i(x) -  \zeta_i(z) ] \left( \frac{ \zeta_i(x) + \zeta_i(z)  }{\phi_i(x) +\phi_i(z)}\right) 
\\&\le&  [ \zeta_i(x) -  \zeta_i(z) ]^2 + [\phi_i(x) - \phi_i(z)]^2 \left( \frac{ \zeta_i(x) + \zeta_i(z)  }{\phi_i(x) +\phi_i(z)}\right)^2  . 
\end{eqnarray*}
From this, (\ref{Lphi}) and (\ref{phiix}) we get 
$$\int_{\RR^n} L (\phi_i) \frac{\zeta^2_i}{\phi_i} \le  \frac{1}{2} \sum_{i=1}^{m} \int_{\RR^n} \int_{\RR^n}  [\zeta_i(x)- \zeta_i(z)]^2 K(z-x) dz dx.$$
This and (\ref{LL1}) finishes the proof.

\end{proof}

We now provide the other side of the argument to conclude the equivalence of the linearized stability (\ref{L}) and the stability inequality (\ref{stability}).   Note that the following proof is strongly motivated by the one given by Ghoussoub and the author in \cite{fg} for the case of semilinear local systems. 

\begin{lemma}\label{equiv}
Suppose that  $u=(u_i)_{i=1}^m$ is a solution of symmetric system (\ref{main}) for which the inequality (\ref{stability}) holds. Assume that conditions (B) and (C) hold as well. Then $u$ is a stable solution of (\ref{main}).  
\end{lemma}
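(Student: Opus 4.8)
The plan is to run the classical ``principal eigenvalue on an exhausting family of balls'' argument, adapting to the nonlocal system the constructions of Berestycki--Caffarelli--Nirenberg and Ghoussoub--Gui \cite{bcn,gg1}, of Ghoussoub and the author \cite{fg} for local systems, and of Ros-Oton--Sire \cite{ros} for the scalar nonlocal equation. Since the system is symmetric and orientable, first fix sign functions $\theta=(\theta_i)_{i=1}^m$, $\theta_i\in\{-1,1\}$, with $c_{ij}:=\theta_i\theta_j\,\partial_j H_i(u)>0$ for all $i\neq j$; this choice is consistent for $i<j$ and for $i>j$ because $\mathbb H$ is symmetric. After the substitution $\phi_i\mapsto\theta_i\phi_i$ it suffices to produce a constant $\lambda\geq 0$ and functions $w=(w_i)_{i=1}^m$ with $w_i\geq 0$, $w\not\equiv 0$, solving the cooperative linear system $L(w_i)=\sum_{j=1}^m c_{ij}w_j+\lambda w_i$ in $\RR^n$; then $\phi_i:=\theta_i w_i$ will be the functions required in Definition \ref{stable}, strict positivity of each $w_i$ (hence $\partial_j H_i(u)\phi_i\phi_j=c_{ij}w_iw_j>0$ for $i<j$) being recovered at the end.

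For each $R>1$ set
$$\lambda_R:=\inf\Big\{\ \tfrac12\sum_{i=1}^m\int_{\RR^n}\int_{\RR^n}[\zeta_i(x)-\zeta_i(z)]^2K(x-z)\,dz\,dx-\sum_{i,j=1}^m\int_{B_R}\partial_j H_i(u)\,\zeta_i\zeta_j\,dx\ :\ \zeta_i\in C_c^\infty(B_R),\ \textstyle\sum_i\|\zeta_i\|_{L^2(\RR^n)}^2=1\ \Big\},$$
the principal eigenvalue of the linearized operator on $B_R$ with zero exterior datum. By standard variational arguments for this class of operators (cf. \cite{ros} for $m=1$) the infimum is attained by some $\phi^R$; replacing $\zeta_i$ by $\theta_i|\zeta_i|$ does not increase the Rayleigh quotient --- the triangle inequality controls the quadratic kinetic term, the diagonal coupling is unchanged, and $c_{ij}>0$ together with $|\zeta_i||\zeta_j|\geq\theta_i\theta_j\zeta_i\zeta_j$ controls the off-diagonal coupling --- so $\phi^R$ may be chosen with $w_i^R:=\theta_i\phi_i^R\geq 0$, and it solves $L(w_i^R)=\sum_j c_{ij}w_j^R+\lambda_R w_i^R$ in $B_R$. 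Every $\zeta$ supported in $B_R$ is admissible in the stability inequality (\ref{stability}), so the Rayleigh quotient above is nonnegative; hence $\lambda_R\geq 0$. Enlarging $R$ enlarges the admissible class and leaves the functional unchanged on functions supported in the smaller ball, so $\lambda_R$ is nonincreasing in $R$; therefore $\lambda_R\downarrow\lambda\geq 0$ and, in particular, $\lambda_R$ is bounded above.

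Now normalize $\phi^R$ so that $\sum_i w_i^R(0)=1$. Since $u$ is bounded and $H$ is locally Lipschitz, the coefficients $c_{ij}$ are bounded on $\RR^n$, and $\lambda_R$ is bounded, so the Harnack inequality (B), chained along $O(\rho)$ overlapping unit balls joining $0$ to a point of $B_\rho$, bounds each $w_i^R$ on $B_\rho$ by a constant independent of $R$ (for $R>\rho+1$). Feeding this into condition (C) applied to $L(w_i^R)=\sum_j c_{ij}w_j^R+\lambda_R w_i^R$, whose right-hand side is then locally bounded uniformly in $R$, yields a uniform $C^\alpha(B_{\rho/2})$ bound. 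A diagonal argument produces a subsequence with $w^{R_k}\to w$ in $C^{\alpha'}_{\mathrm{loc}}(\RR^n)$ and $\lambda_{R_k}\to\lambda$; since $K$ is supported in $B_1$, $L(w_i^{R_k})(x)\to L(w_i)(x)$ for every $x$, so $w$ solves $L(w_i)=\sum_j c_{ij}w_j+\lambda w_i$ in $\RR^n$, with $w_i\geq 0$ and $\sum_i w_i(0)=1$, so $w\not\equiv 0$. Finally, by the strong maximum principle contained in (B): if $w_{i_0}\not\equiv 0$ then $w_{i_0}>0$ everywhere, and then for every $j\neq i_0$ one has $L(w_j)-(c_{jj}+\lambda)w_j=\sum_{k\neq j}c_{jk}w_k\geq c_{ji_0}w_{i_0}>0$, forcing $w_j>0$ everywhere as well; thus every $w_i>0$, and undoing the substitution gives the $\phi$ and $\lambda$ required in Definition \ref{stable}.

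The main obstacle is keeping the limiting eigenfunction nontrivial: under a global normalization $\|\phi^R\|_{L^\infty(\RR^n)}=1$ the mass may escape to infinity, so one must normalize at the origin and rely crucially on the Harnack inequality (B) for $R$-independent local bounds --- together with the minor point that (B), stated for global solutions, is used here in a local form, which is legitimate because $K$ is compactly supported in $B_1$. The only other nonroutine ingredient is the existence and regularity of the principal eigenpair $(\lambda_R,\phi^R)$ on $B_R$ for the nonlocal operator $L$, borrowed from \cite{ros}; the remainder is a direct transcription of the argument in \cite{fg}.
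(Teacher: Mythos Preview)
Your proposal is correct and follows essentially the same route as the paper's own proof: both set up the principal eigenvalue $\lambda_R$ on an exhausting family of balls, use orientability to select a nonnegative eigenfunction, normalize at the origin, invoke the Harnack inequality~(B) for $R$-independent local bounds and the H\"older estimate~(C) for compactness, pass to the limit, and finish with the strong maximum principle to upgrade $w_i\geq 0$ to $w_i>0$. Your write-up is in fact somewhat more careful than the paper's on a few points---you justify the sign choice variationally via $|\zeta_i(x)-\zeta_i(z)|\geq\big||\zeta_i(x)|-|\zeta_i(z)|\big|$ rather than appealing vaguely to a maximum principle, you claim only $C^{\alpha'}_{\mathrm{loc}}$ convergence (which is what~(C) actually gives), and you flag that~(B) is being applied in a local form, legitimate because $K$ is supported in $B_1$.
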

\begin{proof}
For each $R>0$ define  
  \begin{equation}
  \lambda_1(R) := \inf  \left\{ T_R(\zeta) ;  \zeta=(\zeta_i)_{i=1}^{m} \ \text{where} \  \zeta_i \in H_K (\RR^n) ,  \zeta_i=0 \ \text{in}\  \RR^n\setminus B_R,  \ \sum_{i=1}^m \int_{B_R} \zeta_i^2=1  \right\}, 
  \end{equation}
  where  $H_K(B_R)$ is the closure of $C_c^\infty(\RR^n)$ with the norm 
  $$||f||_{H_K}(\RR^n):= \int_{\RR^n} \int_{\RR^n} |f(x)- f(z) |^2 K(x-z) dxdz, $$  
and the operator $T_R$ is given by 
  \begin{eqnarray}
T_R(\zeta) &:=&  \sum_{i,j=1}^{m} \int_{\RR^n}  \partial_j H_i(u)  \zeta_i(x) \zeta_j(x) dx \\&& - \frac{1}{2} \sum_{i=1}^{m} \int_{\RR^n} \int_{\RR^n}  [\zeta_i(x)- \zeta_i(z)]^2 K(z-x) dz dx .
  \end{eqnarray}
From (\ref{stability}) we have $\lambda_1(R)\ge 0$ and there exist eigenfunctions $\zeta_i^R$ such that 
 for each $i=1,\cdots,m$, 
  \begin{equation}\label{zetar}
  \left\{
                      \begin{array}{ll}
                                           L(\zeta_i^R)= \sum_{j=1}^m \partial_j H_i(u)  \zeta^R_j + \lambda_1(R) \zeta_i^R, & \hbox{if $|x|< R$,} \\
                       \zeta_i^R=0, & \hbox{if $|x|= R$.}
                                                                       \end{array}
                    \right.\end{equation}
 Since the system  is orientable, there exists $(\theta_i)_{i=1}^{m}$ such that $ \partial_j H_i(u) \theta_i\theta_j >0.$ We now apply the maximum principle and use the sign of each $\theta_i$ to assign signs for the eigenfunctions $(\zeta_k^R)_k$ so that they satisfy 
\begin{equation}\label{zetamono}
 \partial_j H_i(u)  \zeta_i^R\zeta_j^R> 0. 
 \end{equation} 
Here $\zeta_i^R$ can be replaced by $sgn(\theta_i)|\zeta_i^R|$ if needed and we can normalize each $\zeta_i$ so that 
  \begin{equation}\label{normal}
  \sum_{i=1}^m  |\zeta_k^R(0)|=1.
  \end{equation}
  Note that $\lambda_1(R) \downarrow \lambda_1 \ge 0$ as $R \to \infty$.  Define $\chi_i^R:=sgn(\zeta_i^R) \zeta_i^R$ and multiply system (\ref{zetar}) with $sgn(\zeta^R_i)$ to get that 
 \begin{equation}\label{chir}
  \left\{
                      \begin{array}{ll}
                                         L (\chi_i^R) = \partial_i H_{i}(u) \chi_i^R +  \sum_{j\neq i} |\partial_j H_i(u)|  \chi_j^R+   \lambda_1(R)  \chi_i^R , & \hbox{if $|x|< R$,} \\
                       \chi_i^R=0, & \hbox{if $|x|= R$.}
                                                                       \end{array}
                    \right.\end{equation}
 Here we have used the fact that from orientability,  $sgn{(\zeta_i^R \zeta_j^R) }=sgn(\partial_j H_i(u)) $ when $i\neq j$.  Each $\chi_i^R$ is a  nonnegative function and $\chi=(\chi_i)_{i=1}^m$ is a solution for (\ref{chir}). From the Harnack's inequality, given in the condition (C), for any compact subset $\Omega$, $\max_\Omega |\chi_i^R|	\le C(\Omega) \min_\Omega |\chi_i^R| $ for all $i=1,\cdots,m$ with the latter constant being independent of $\chi_i^R$. From elliptic estimates, given as the condition (B), one can see that the family $(\chi_i^R)_R$ have 
uniformly bounded derivatives on compact sets. So, for a subsequence $(R_k)_k$ going to infinity, $(\chi_i^{R_k})_k$  converges in $C^2_{loc} (\mathbb{R}^n)$ to some $\chi_i \in C^2(\mathbb {R}^n)$ and that $\chi_i\ge0$. From (\ref{chir}) we see that $\chi_i$ satisfies 
\begin{eqnarray}\label{chi}
 L (\chi_i) &=& \partial_i H_{i}(u) \chi_i +  \sum_{j\neq i} |\partial_j H_i(u)|  \chi_j+   \lambda_1  \chi_i \\&\ge&\nonumber  (\partial_i H_{i}(u) + \lambda_1 )\chi_i 
  \end{eqnarray} 
Note that  $\chi_i\ge 0$ and $\partial_i H_{i}(u)$ is bounded. The strong maximum principle  implies that  either $\chi_i=0$ or $\chi_i>0$ in $\mathbb{R}^n$.  Suppose that $\chi_i=0$, then  from (\ref{chi}) we have $ \sum_{j\neq i} |\partial_j H_i(u)|  \chi_j=0$ that is $\chi_j=0$ if $j\neq i$. This  is in contradiction with (\ref{normal}).  Therefore,   $\chi_i>0$ for all $i=1,\cdots,m$.    Finally, set  $\phi_i:=sgn(\theta_i) \chi_i$ for $i=1,\cdots, m$ and observe that  $(\phi_i)_i$ satisfies (\ref{L}) and that $\partial_j H_i(u) \phi_j \phi_i > 0$ for $i<j$, which means that $u=(u_i)_{i=1}^m$ is a  stable solution.  

\end{proof}

{\it Acknowledgment}.    The author would like to thank Yannick Sire for his comments and online discussions. We thank ETH Zurich for the hospitality where parts of this work were  completed.

\end{document}